\providecommand{\U}[1]{\protect\rule{.1in}{.1in}}
\newtheorem{theorem}{Theorem}[section]
\theoremstyle{plain}
\newtheorem{definition}[theorem]{Definition}
\newtheorem{example}[theorem]{Example}
\numberwithin{equation}{section}
\begin{document}
\title[Hausdorff Compactifications]{Hausdorff Compactifications}
\author{Matt Insall}
\address{Department of Mathematics and Statistics, Missouri University of Science and
Technology, 400 W. 12th St., Rolla, MO 65409-0020}
\email{insall@mst.edu}
\author{Peter A. Loeb}
\address{Department of Mathematics, University of Illinois,\\
1409 West Green St., Urbana, Ill. 61801, USA}
\email{PeterA3@AOL.com}
\author{Ma\l gorzata Aneta Marciniak}
\address{Department of Mathematics, Engineering and Computer Science, LaGuardia
Community College of the City University of New York, 31-10 Thomson Avenue,
Long Island City, NY}
\email{goga@mimuw.edu.pl}
\date{September 4, 2020}
\subjclass[2010]{Primary 03H05, 54D35; Secondary 54J05}
\keywords{Compactifications, Hausdorff compactifications, nonstandard methods, moduli space.}

\begin{abstract}
Previously, the authors used the insights of Robinson's nonstandard analysis
as a powerful tool to extend and simplify the construction of some
compactifications of regular spaces. They now show that any Hausdorff
compactification is obtainable with their method.

\end{abstract}
\maketitle

\section{Introduction}

In \cite{RegularCompact}, we extended to more general compactifications the
work on topological ends of Insall and Marciniak in \cite{InMal}. Fix an
appropriately saturated nonstandard extension of a regular, noncompact
topological space $\left(  X,\mathcal{T}\right)  $. In \cite{RegularCompact},
we showed that points needed to form a compactification can be formed from
equivalence classes of points not in the monad of any standard point. Any
equivalence relation on such points works, but not every compactification of
$X$ can be obtained this way. We show here that any Hausdorff compactification
of $X$ can be obtained using a natural equivalence relation. We conclude with
brief discussions of an application to a moduli space of triangles and to the
Martin boundary in potential theory and probability.

\section{General Compactifications}

First we review the construction in \cite{RegularCompact}. Also for
background, see \cite{Robinson}, \cite{RobinsonCompact}, and \cite{LW}. Let
$\left(  X,\mathcal{T}\right)  $ be a regular, noncompact topological space.

\begin{definition}
\label{DefinitionCompactification}By a compactification of $\left(
X,\mathcal{T}\right)  $, we mean a compact space $(Y,\mathcal{T}_{Y})$ such
that $X$ is a dense subset of $(Y,\mathcal{T}_{Y})$. We require here that the
identity mapping of $X$ into $Y$ is a homeomorphism from $\left(
X,\mathcal{T}\right)  $ to $X$ with the relative $\mathcal{T}_{Y}$-topology.
\end{definition}

\begin{example}%
\begin{rm}%
A simple example where the latter requirement fails is given by the set
$X=\left\{  1/n:n\in\mathbb{N}\right\}  \cup\left\{  0\right\}  $ with the
discrete topology. We set $Y=X$, but now the singleton $\{0\}$ is no longer an
open set. Its neighborhoods consist of all intervals in $X$ from $0$ to $1/n$,
$n\in\mathbb{N}$. Clearly, the set $X$ is a dense and continuous image in $Y$,
but $Y$ is not a compactification of $X$ in the sense of Definition
\ref{DefinitionCompactification}.%
\end{rm}%

\end{example}

We now fix a $\kappa$-saturated nonstandard extension of $\left(
X,\mathcal{T}\right)  $, where $\kappa$ is greater than the cardinality of the
topology $\mathcal{T}$ on $X$.

\begin{definition}
We call a point $x\in{}^{\ast}X$ \textbf{remote} if $x$ is not near-standard,
i.e., not in the monad of any standard point of $X$. Given an equivalence
relation on the set of remote points of {}$^{\ast}X$, we write $x\sim y$ if
$x$ and $y$ are remote and equivalent.
\end{definition}

Let $Y$ be the point set consisting of points of\ $\ X$, called
\textbf{s-points}, together with all equivalence classes of remote points,
where each such equivalence class is treated as a single point. We call each
such point of $Y$ an \textbf{r-point}. We supply ${}^{\ast}X$ with the
S-topology, that is the topology generated by the nonstandard extensions of
standard open subsets of $X$. Let $\varphi$ be the mapping from ${}^{\ast}X$
onto $Y$ that sends near-standard points to their standard parts and remote
points to their respective r-points. The \textbf{neighborhood filter base}
$B(y)$ at an r-point $y$ in $Y$ consists of all sets of the form
$\varphi\left(  {}^{\ast}U\right)  $, where $U$ is a standard open subset of
$X$ with nonstandard extension containing the entire equivalence class
corresponding to $y$ (whence $U\neq\varnothing$). The \textbf{neighborhood
filter base} $\mathcal{B}(x)$ at an s-point $x$ in $Y$ consists of all sets of
the form $\varphi\left(  ^{\ast}U\right)  $, where $U$ is a standard open
subset of $X$ with $x\in U$. For each point $p\in Y$, $\mathcal{B}(p)$ is in
fact a filter base (see \cite{RegularCompact}.) As usual, a set $O\subseteq Y$
is called \textquotedblleft open\textquotedblright\ if for each point $p\in
O$, there is an element $\varphi\left(  ^{\ast}U\right)  \in\mathcal{B}(p)$
with $\varphi\left(  {}^{\ast}U\right)  \subseteq O$. The collection of open
sets forms a topology on $Y$. Note that we have not made any claim about the
interior with respect to $Y$ of any member of any neighborhood filter base. We
let $\mathcal{T}_{Y}$ denote the topology on $Y$, i.e., the collection of open
sets, generated by the neighborhood filter bases. The following result is
established in \cite{RegularCompact} using the fact (see \cite{Todorov} and
\cite{Todorov2}) that ${}^{\ast}X$ with the S-topology is compact.

\begin{theorem}
The map $\varphi$ is a continuous surjection from ${}^{\ast}X$ onto $Y$,
whence, $Y$ is compact. Moreover, the point set $X$ is dense in $Y$ supplied
with the $\mathcal{T}_{Y}$-topology, and $\mathcal{T}$ is stronger than, or
equal to, the relative $\mathcal{T}_{Y}$-topology on $X$.
\end{theorem}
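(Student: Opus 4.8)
The plan is to prove the five sub-assertions in sequence — surjectivity of $\varphi$, continuity of $\varphi$, compactness of $Y$, density of $X$ in $Y$, and the assertion that $\mathcal{T}$ is at least as fine as the relative $\mathcal{T}_Y$-topology on $X$ — essentially by unwinding the explicit descriptions of the neighborhood filter bases, the main external input being the cited fact that ${}^{\ast}X$ is compact in the S-topology. Surjectivity is immediate: each s-point $x\in X\subseteq{}^{\ast}X$ is near-standard with standard part $x$, so $\varphi(x)=x$; and each r-point is by definition a nonempty equivalence class of remote points, hence the $\varphi$-image of any of its members.

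For continuity I would argue directly from the definition of $\mathcal{T}_Y$. Fix $q\in{}^{\ast}X$ and a $\mathcal{T}_Y$-open set $O\ni\varphi(q)$; by the definition of openness in $Y$ there is a basic neighborhood $\varphi\left({}^{\ast}U\right)$ of $\varphi(q)$ with $\varphi\left({}^{\ast}U\right)\subseteq O$, where $U$ is a nonempty standard open subset of $X$. The key point is that $q\in{}^{\ast}U$ in both of the two cases. If $q$ is near-standard, then $\varphi(q)$ is its standard part, the description of $\mathcal{B}(\varphi(q))$ gives $\varphi(q)\in U$, and since $U$ is open the monad of $\varphi(q)$ — which contains $q$ — lies in ${}^{\ast}U$. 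If $q$ is remote, then the description of $B(\varphi(q))$ says the entire equivalence class corresponding to $\varphi(q)$, which contains $q$, is contained in ${}^{\ast}U$. In either case ${}^{\ast}U$ is open in the S-topology and $q\in{}^{\ast}U\subseteq\varphi^{-1}\left(\varphi\left({}^{\ast}U\right)\right)\subseteq\varphi^{-1}(O)$, so $\varphi^{-1}(O)$ is S-open and $\varphi$ is continuous. I would stress that this does not require $\varphi\left({}^{\ast}U\right)$ itself to be $\mathcal{T}_Y$-open — the paper explicitly declines to claim that — but only that it lies inside $O$. Compactness of $Y$ is then immediate, $Y$ being the continuous image under $\varphi$ of a compact space.

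For density, let $O$ be a nonempty $\mathcal{T}_Y$-open set, pick $p\in O$ and a basic neighborhood $\varphi\left({}^{\ast}U\right)\subseteq O$ with $U$ nonempty and standard open; any standard $x\in U$ has $\varphi(x)=x\in\varphi\left({}^{\ast}U\right)\subseteq O$, so $O\cap X\neq\varnothing$. For the comparison of topologies, let $O$ be $\mathcal{T}_Y$-open and $x\in O\cap X$, and choose $\varphi\left({}^{\ast}U\right)\in\mathcal{B}(x)$ with $\varphi\left({}^{\ast}U\right)\subseteq O$, so that $U$ is $\mathcal{T}$-open and $x\in U$. Every $x'\in U$ satisfies $\varphi(x')=x'\in\varphi\left({}^{\ast}U\right)\subseteq O$, whence $U\subseteq O\cap X$; thus $O\cap X$ is a $\mathcal{T}$-neighborhood of each of its points, hence $\mathcal{T}$-open, which is precisely the statement that $\mathcal{T}$ is at least as fine as the relative $\mathcal{T}_Y$-topology on $X$.

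The genuinely delicate point is the case analysis in the continuity argument: one must see that every basic $\mathcal{T}_Y$-neighborhood $\varphi\left({}^{\ast}U\right)$ of a point $\varphi(q)$ comes from an S-open set ${}^{\ast}U$ that already contains $q$, which is where the two-part description of the filter bases is used. The rest is bookkeeping, and beyond the S-compactness of ${}^{\ast}X$ no further saturation is invoked here, saturation having already been used in \cite{RegularCompact} to establish that the $\mathcal{B}(p)$ are filter bases.
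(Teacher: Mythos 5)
Your proof is correct, and it follows exactly the route the paper indicates: the paper itself gives no argument here but cites \cite{RegularCompact} together with the S-compactness of ${}^{\ast}X$, which is precisely the external fact you use, with the remaining steps (the two-case verification that $q\in{}^{\ast}U$ for continuity, and the use of standard points of $U$ for density and the comparison of topologies) being the straightforward unwinding of the filter-base definitions. Nothing to object to.
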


Recall that all sets in $\mathcal{T}$ are themselves in $\mathcal{T}_{Y}$ if
$\mathcal{T}$ is a locally compact topology on $X$. The following is an
example where the $\mathcal{T}$-topology on $X$ is strictly stronger than the
relative $\mathcal{T}_{Y}$-topology.

\begin{example}%
\begin{rm}%
Let $X$ be the rational numbers in the interval $[0,1]$. A point $x$ in
$^{\ast}X$ is remote if and only if $x$ is in the monad of an irrational point
in $[0,1]$. Put all remote points in the same equivalence class, so there is
only one $r$-point, denoted by $\alpha$, in $Y$. If $U$ is a nonempty open set
in $X$, then {}$^{\ast}U$ contains remote points. In order for there to be a
set $V$ in $\mathcal{T}_{Y}$ for which the restriction to $X$ is $U$, it is
necessary that $V$ contains an element of the neighborhood filter base of
$\alpha$. That is, $V$ must contain a set $\varphi\left[  ^{\ast}W\right]  $
where $W$ is in $\mathcal{T}$, and $^{\ast}W$ contains every remote point in
$^{\ast}X$. Such a subset $W$ of $X$ must contain every point of $X$ except
perhaps those in a compact subset of $X$. Therefore, the point set $Y$
consists of the points of $X$ together with the point $\alpha$, and the
nonempty elements of $\mathcal{T}_{Y}$ are the complements in $Y$ of compact
subsets of $X$.%
\end{rm}%

\end{example}

In the next section, we show that any Hausdorff compactification of a
Hausdorff space $\left(  X,\mathcal{T}\right)  $ can be produced with an
equivalence relation on the remote points of {}$^{\ast}X$. First, however, we
give an example showing that this may not be true if the compactification is
not Hausdorff. It is also an example of a Hausdorff space $\left(
X,\mathcal{T}\right)  $ that is not locally compact, but still forms an open
set in a compactification.

\begin{example}%
\begin{rm}%
We modify Example 2 on Page 630 from \cite{Compactifications}. Let $X$ be the
subset of the plane given by%
\[
X=\left\{  (x,0):x\text{ rational, }0\leq x\leq1\right\}  .
\]
To form the compactification $Y$, we adjoint to $X$ the subset $\Delta$ in the
plane given by%
\[
\Delta=\left\{  (x,1):0\leq x\leq1\right\}  .
\]
A typical neighborhood of a point $\left(  x_{0},0\right)  $ in $X$ is given
by the relative plane topology. That is, it is given by a constant
$\varepsilon>0$ and has the form%
\[
\{\left(  x,0\right)  \in X:|x-x_{0}|<\varepsilon\}.
\]
A typical neighborhood of a point $\left(  x_{1},1\right)  \in\Delta$ is given
by a constant $\delta>0$ and has the form%
\[
\{\left(  x,1\right)  \in\Delta:|x-x_{1}|<\delta\}\cup\left\{  (x,0)\in
X:|x-x_{1}|<\delta\right\}  .
\]
Clearly, $X$ is dense in $Y=X\cup\Delta$, and $Y$ is not Hausdorff. Moreover,
$Y$ is compact, since any net has a cluster point in $\Delta$. The
compactification $Y$ cannot be obtained using an equivalence relation on the
remote points of ${}^{\ast}X$ when the neighborhoods are formed as in
\cite{RegularCompact}. To see this, suppose $\left(  r,0\right)  $ is a remote
point in $^{\ast}X$ that is in the equivalence class forming the point
$\left(  0,1\right)  $ in $\Delta$. Then $r$ is in the monad of a standard
irrational $s$ in $[0,1]$. If the construction from \cite{RegularCompact}
works here, then there must be a standard open set $U$ in $X$ such that
$\left(  r,0\right)  $ is in $^{\ast}U$, and every near-standard point in
{}$^{\ast}U$ \ has $x$-coordinate less than $s/2$. But this is impossible.%
\end{rm}%

\end{example}

\section{Hausdorff Compactifications}

Assume that $\left(  X,\mathcal{T}\right)  $ is a Hausdorff \ space, and let
$\left(  Z,\mathcal{T}_{Z}\right)  $ be a Hausdorff compactification of
$\left(  X,\mathcal{T}\right)  $. That is, $X$ is dense subset of the compact
Hausdorff space $Z$, and the mapping from $X$ to $X$ as a subset of $Z$ is a homeomorphism.

\begin{definition}
Remote points $p$ and $q$ in $^{\ast}X$ are equivalent, i.e., $p\sim q$, if
$p$ and $q$ are in the monad of a point $z\in Z\setminus X$. Let $\left(
Y,\mathcal{T}_{Y}\right)  $ be the compact space produced with this
equivalence relation. Denote by $F$ the mapping from $Z$ to $Y$ that is the
identity mapping from $X$ as a subset of $Z$ to $X$ as a subset of $Y$, and
maps each point $z\in Z\setminus X$ to the r-point in $Y$ formed from the
equivalence class formed by those points in $^{\ast}X$ that are in the monad
of $z$.
\end{definition}

\begin{theorem}
\label{main}The map $F$ is a bijection, and indeed, a homeomorphism from
$\left(  Z,\mathcal{T}_{Z}\right)  $ onto $\left(  Y,\mathcal{T}_{Y}\right)
$. It follows that any Hausdorff compactification of $X$ can be obtained from
an equivalence relation on the remote points of $^{\ast}X$.
\end{theorem}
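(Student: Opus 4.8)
The plan is to verify that $F$ is a well-defined bijection and then that both $F$ and $F^{-1}$ are continuous; since $Z$ is compact and $Y$ is Hausdorff (this itself must be checked, or rather: it suffices to show $F$ is a continuous bijection from a compact space onto a Hausdorff space, or a closed continuous bijection), the homeomorphism follows. First I would check that the equivalence relation is well-posed: every remote point of ${}^{\ast}X$ lies in the monad of \emph{some} point $z\in Z$, because $Z$ is compact, so ${}^{\ast}Z$ has the property that every point is near-standard in $Z$; and such a remote point cannot be in the monad of any point of $X$ (that is what "remote'' means, using that $\mathcal{T}$ agrees with the subspace topology from $Z$ on $X$), so $z\in Z\setminus X$. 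Moreover $z$ is unique because $Z$ is Hausdorff, so two distinct points of $Z\setminus X$ have disjoint monads. This shows $\sim$ is a genuine equivalence relation whose classes are exactly the nonempty sets $\mathrm{monad}_Z(z)\cap({}^{\ast}X\setminus\mathrm{nst})$ for $z\in Z\setminus X$, and that each such set is nonempty precisely when $z\in Z\setminus X$ — here I use density of $X$ in $Z$ to see the monad of $z\in Z\setminus X$ meets ${}^{\ast}X$. Consequently $F$ is a bijection: it is the identity on $X$, and it matches $r$-points of $Y$ with points of $Z\setminus X$ one-to-one.

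Next I would prove $F$ is continuous. Take $z\in Z\setminus X$ and a basic $\mathcal{T}_Y$-neighborhood of $F(z)$, which by the construction has the form $\varphi({}^{\ast}U)$ for a standard open $U\subseteq X$ whose extension ${}^{\ast}U$ contains the entire equivalence class of $z$, i.e. contains every remote point in $\mathrm{monad}_Z(z)$. I want a $\mathcal{T}_Z$-open set $V\ni z$ with $F(V)\subseteq\varphi({}^{\ast}U)$. The natural candidate: since $Z$ is a compact Hausdorff (hence regular) space and $X$ is dense, one shows that the $Z$-closure of $U$, or rather a suitable $Z$-open set, works. Concretely, I would argue by contradiction using saturation: if no such $V$ existed, then the filter of $Z$-neighborhoods of $z$ together with the condition "lies outside $U$'' would be finitely satisfiable, producing (by $\kappa$-saturation, with $\kappa$ above $|\mathcal{T}|$, which also bounds the relevant cardinality on the $Z$ side since $X$ is dense in $Z$) a point of $\mathrm{monad}_Z(z)$ not in ${}^{\ast}U$; any such point is remote (it is in the monad of $z\notin X$) and in the class of $z$, contradicting that ${}^{\ast}U$ contains that whole class. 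Continuity of $F$ at points of $X$ is the statement that $\mathcal{T}$ is finer than the relative $\mathcal{T}_Y$-topology on $X$, which is already in the reviewed theorem, combined with the fact that $\mathcal{T}$ equals the relative $\mathcal{T}_Z$-topology.

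Finally, with $F$ a continuous bijection, I would invoke compactness of $Z$ and the Hausdorff property of $Y$ to conclude $F$ is a homeomorphism — so I also need $Y$ Hausdorff. For that, given two distinct points of $Y$ I separate their $\varphi$-preimages in ${}^{\ast}X$ by extensions of disjoint standard open sets, using regularity/Hausdorffness of $X$ together with the separation in $Z$: two distinct $r$-points come from $z_1\neq z_2$ in $Z\setminus X$ with disjoint $Z$-neighborhoods $W_1,W_2$; intersecting with $X$ and pushing through $\varphi$ gives disjoint $\mathcal{T}_Y$-neighborhoods, the key point being that $\varphi({}^{\ast}(W_i\cap X))$ captures the class of $z_i$ and that disjointness of $W_1\cap X, W_2\cap X$ transfers. (The mixed s-point/r-point case is similar and easier.) I expect the main obstacle to be precisely this continuity-of-$F$ step: translating a $Z$-neighborhood of $z$ into a \emph{standard} open subset $U$ of $X$ whose nonstandard extension swallows the whole monad-class of $z$, which is where the saturation hypothesis and the density of $X$ in $Z$ must be used carefully; once that is in hand, the rest is bookkeeping. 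I would then remark that since $Z$ was an arbitrary Hausdorff compactification, this establishes the concluding sentence of the theorem.
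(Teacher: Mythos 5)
Your plan reverses the direction of the paper's argument: the paper shows that $F^{-1}$ is continuous --- equivalently, that $F$ carries $\mathcal{T}_{Z}$-open sets to $\mathcal{T}_{Y}$-open sets --- and then concludes, from the already-established compactness of $Y$ and the assumed Hausdorffness of $Z$, that the continuous bijection $F^{-1}$ is a homeomorphism. That direction never requires knowing that $Y$ is Hausdorff, and it is short because of how $\mathcal{T}_{Y}$ is defined: a set is open exactly when each of its points has a filter-base element inside it, so to see that $F(U)$ is open one only needs, for $z\in U$, an open $V\ni z$ with $\overline{V}\subseteq U$ (regularity of $Z$), after which $\varphi\left(  {}^{\ast}(V\cap X)\right)  \in\mathcal{B}(F(z))$ and $F^{-1}\left(  \varphi\left(  {}^{\ast}(V\cap X)\right)  \right)  \subseteq\overline{V}\subseteq U$. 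Your direction --- prove $F$ continuous, then use compactness of $Z$ --- instead forces you to prove that $Y$ is Hausdorff, and that is where your argument has a genuine gap. (Your continuity-of-$F$ step can be made to work, though in the saturation argument you must also handle witnesses $w\in V\setminus X$ with $F(w)\notin\varphi\left(  {}^{\ast}U\right)$: there the whole class of $w$ lies in ${}^{\ast}\left(  (V\cap X)\setminus U\right)$, so transfer yields a standard point of $(V\cap X)\setminus U$, and only then is the family of conditions finitely satisfiable.)

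The Hausdorff step fails as written for two reasons. First, disjointness of $W_{1}$ and $W_{2}$ does not give disjointness of $\varphi\left(  {}^{\ast}(W_{1}\cap X)\right)$ and $\varphi\left(  {}^{\ast}(W_{2}\cap X)\right)$: $\varphi$ is not injective, and any s-point of $X$ lying in $\overline{W_{1}}\cap\overline{W_{2}}$, or any r-point arising from a point of $\overline{W_{1}}\cap\overline{W_{2}}\setminus X$, belongs to both images; you would need $W_{1},W_{2}$ with disjoint closures (available since $Z$ is compact Hausdorff). Second, and more seriously, elements of the neighborhood filter base $\mathcal{B}(y)$ are not known to be $\mathcal{T}_{Y}$-neighborhoods of $y$ --- the paper explicitly notes that no claim is made about their interiors --- so even disjoint base elements do not by themselves produce the disjoint \emph{open} sets that Hausdorffness requires. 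The natural repair is to show that $F(W)$ is $\mathcal{T}_{Y}$-open for every $\mathcal{T}_{Z}$-open $W$; but that is precisely the continuity of $F^{-1}$, i.e., the paper's own proof, and once you have it both your continuity-of-$F$ step and the Hausdorffness of $Y$ become superfluous, since the homeomorphism then follows from $Y$ compact and $Z$ Hausdorff.
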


%

\begin{proof}%
It is clear that $F$ is bijective. As is well known, it is sufficient to show
that $F^{-1}$ is continuous. For then, the inverse image using $F$ of an open
set in $Y$ will be the\ open complement in $Z$ of the compact image of a
closed, therefore compact set in $Y$. Fix $z\in Z$, and an open neighborhood
$U$ of $z$. By the regularity of $\left(  Z,\mathcal{T}_{Z}\right)  $, there
is an open neighborhood $V$ of $z$ for which the closure is contained in $U$.
Let $O=V\cap X$. Since $\left(  Z,\mathcal{T}_{Z}\right)  $ is a
compactification of $\left(  X,\mathcal{T}\right)  $, $O\in\mathcal{T}$.
Moreover, $^{\ast}O$ contains $F(z)$, and $\varphi\left(  ^{\ast}O\right)  $
is a closed neighborhood of $F(z)$ that maps, using $F^{-1}$, into $U$. It
follows that $F^{-1}$ is continuous.%
\end{proof}%

\begin{example}%
\begin{rm}%
As an application to moduli spaces, we compactify the space of similar
triangles in $\mathbb{R}^{2}$ discussed by Madeline Brandt in \cite{Brandt}.
Here, two triangles are equivalent with respect to this space of triangles if
they are similar. Each\ such equivalence class is represented by the member
with a leg of shortest length on the interval in the $y$-axis from vertex
$(0,0)$ to vertex $(0,1)$, and with the third vertex in the parameter space%
\[
S=\left(  (x,y)\in\mathbb{R}^{2}:x>0,\ y\geq1/2,\ x^{2}+\left(  y-1\right)
^{2}\geq1\right)  .
\]
Thinking of $S$ as a subset of the complex plane, the remote points are the
points in the nonstandard extension of $S$ with argument in the monad of
$\pi/2$ together with points with unlimited modulus and complex argument in
the interval {}$^{\ast}(0,\pi/2)$. To form a compactification, we can call
equivalent all remote points with argument in the monad of $\pi/2$, and
represent that equivalence class with one point at $2i$. On the other hand, we
can break up that equivalence class so there is just one point for members
with unlimited modulus, and represent that set with the nonnegative $y$-axis.
We can then call two remote points with infinitesimally close, limited moduli
and argument in the monad of $\pi/2$ equivalent. That equivalence class is
then represented by the line from the origin to the unique standard part of
the members. Each remaining remote point is the third vertex of a nonstandard
triangle. That triangle produces a \textquotedblleft standard
part\textquotedblright\ in $S$ consisting of the standard parts of those
points of limited modulus in the triangle. We call two such remote points
equivalent if their arguments have the same standard part $\theta<\pi/2$. The
corresponding triangles have the same standard part in $S$. The corresponding
equivalence class can be represented by that standard part, which is the
figure formed by the line from $(0,0)$ to $(0,1)$ together with the infinite
line segment with endpoint $(0,0)$ forming an angle $\theta$ with the $x$-axis
and the parallel infinite line segment with endpoint $(0,1)$.%
\end{rm}%

\end{example}

\begin{example}%
\begin{rm}%
The Martin compactification (see \cite{Doob}) is an important construction in
potential theory; it can be seen as an application of Theorem 3.14 in
\cite{RegularCompact}. Martin's boundary is also of great importance in
probability theory. Motivated by our Theorem \ref{main}, the second author
obtained a probability formulated equivalence relation in \cite{MartinLux}
that yields the Martin compactification for a number of illuminating examples.
His approach, \textquotedblleft looking inside\textquotedblright\ a domain,
only makes sense when one can speak of points that are neither points of an
existing boundary nor points in a compact subset of the domain.%
\end{rm}%

\end{example}

\end{document}